\newtheorem{theorem}{Theorem}
\newtheorem{lemma}{Lemma}
\newtheorem{remark}{Remark}
\newcommand{\indicator}{\mathbf{1}}
\newcommand{\F}{\mathcal{F}}
\newcommand{\R}{\mathbb{R}}
\newcommand{\M}{\mathcal{M}}
\renewcommand\P{\operatorname{\mathbf{P}}}
\newcommand\E{\operatorname{\mathbf{E}}}
\begin{document}
 \title{Two-sided optimal stopping for L\'evy processes}
 \author{Ernesto  Mordecki and Facundo Oli\'u Eguren}
 \maketitle
 \begin{abstract}
Infinite horizon optimal stopping problems for a L\'evy processes with a two-sided reward function are considered.
A two-sided verification theorem is presented in terms of the overall supremum and the overall infimum of the process.
A result to compute the angle of the value function at the optimal thresholds of the stopping region is given.
To illustrate the results,  the optimal stopping problem of a compound Poisson process with two-sided exponential jumps 
and a two-sided payoff function is solved.
In this example, the smooth-pasting condition does not hold.
\end{abstract}
\section{Introduction}
The presence of the maximum in the solution of optimal stopping problems for L\'evy processes
is nowadays understood, as it was summarized in the monograph by \cite{Kyprianou}.
After the work of \cite{Surya} (see also \cite{MordeckiMishura}) it became clear that
one sided problems with arbitrary payoff functions could be solved with the help of an \emph{averaging function}.
More recently, some formulas appeared for two sided problems, where the infimum and the supremum took part in the solution.
For instance, in \cite{MordeckiSalminen}, with the help of representation techniques, 
a formula for the value function through the sum two averaging functions
(one for the maximum and another for the minimum) 
was obtained. Afterwards, a similar formula was obtained through verification techniques
by \cite{ChristensenEtAl}. This second result involved the supremum of the two averaging functions.


The purpose of the present work is then to obtain a verification theorem for the optimal stopping problem of a 
L\'evy process in the two sided case, through  the sum of two averaging functions,
and to provide a simple example that seems not possible to be solved with the existing techniques. 


References on optimal stopping problems for L\'evy processes with two sided solutions, 
to our knowledge, are few.
Perpetual American strangle options are considered by \cite{ChangSheu},
through the solution of a free boundary integro-diffe\-ren\-tial problem with moving boundaries.
In this case, the smooth pasting condition is a key ingredient, and a non vanishing gaussian part in the
L\'evy process is used to ensure this condition.
These results follow previously obtained unpublished  ones by \cite{Boyarchenko}.
Similar type of techniques were applied in \cite{BuonaguidiMuliere} to solve a statistical problem:
the Bayesian sequential testing of two simple hypothesis.
More recently, \cite{Palmowski} found disconnected continuation regions in American put options with negative
discount rates in L\'evy  models.

The content of the rest of the paper is as follows. 
In section \ref{section:2} we formulate the verification result for two-sided optimal stopping with the corresponding proof. 
In section \ref{section:angle} a result to compute the angle (i.e. the difference of the right and left derivatives) of the value function at a critical threshold is obtained.
Section \ref{section:example} contains an example:
the optimal stopping problem of a compound Poission process with two-sided exponential jumps,
(no gaussian component) and payoff function $g(x)=|x|$.
%


\section{A verification result for optimal stopping}\label{section:2}

Let $X=\{X_t\colon t\geq 0\}$ be a L\'evy process defined on a
stochastic basis 
$(\Omega, {\cal F}, {\bf F}=({\cal F}_t)_{t\geq 0}, \P_x)$ 
departing from $X_0=x$. The corresponding expectation is denoted by $\E_x$. 
The L\'evy-Khintchine formula characterizes the law of the process,
stating, for $z\in i\R$, that 
$
\E_0 e^{zX_t}=e^{t\psi(z)}
$
with
\begin{equation*}
\psi(z)=az+{\sigma^2\over 2}z^2+\int_{\R}\left(e^{zy}-1-zh(y)\right)\Pi(dy),
\end{equation*}
where $a\in\R$, $\sigma\geq 0$ and $\Pi(dy)$ is a non-negative measure (the \emph{jump measure}) 
that satisfies $\int_{\R}(1\wedge y^2)\Pi(dy)<\infty$.
Here $h(y)=y\indicator_{\{|y|<1\}}$ is a truncation function.
For general references on L\'evy processes see \cite{Bertoin} or \cite{Kyprianou}.
The set of stopping times is the set of random variables
$$
\M=\{\tau\colon\Omega\to[0,\infty] \text{ such that } \{\tau\leq t\}\in\mathcal{F}_t \text{ for all $t\geq 0$}\}.
$$
Observe that we allow the possibility $\tau=\infty$ as for several optimal stopping problems, the optimal stopping time
is within this class.
A key r\^ole in the solution of two-sided problems is played by the overall supremum and infimum of the process,
defined respectively by
\begin{equation*}
M=\sup\{X_t\colon 0\leq t\leq e_r\},\quad
I=\inf\{X_t\colon 0\leq t\leq e_r\},
\end{equation*}
where $e_r$ is an exponential random variable of parameter $r>0$, independent of $X$.
Observe that as $r>0$ both random variables $M$ and $I$ are proper.

Given a non-negative continuous payoff function $g(x)$,
a L\'evy process $X$, 
and a discount factor $r>0$, the optimal stopping problem (OSP) consists
in finding the value function ${V}(x)$ and the optimal stopping rule ${\tau^*}$ such that
\begin{equation}\label{eq:osp}
{V}(x)=\sup_{\tau\in\M}\E( e^{-r\tau}g(X_{\tau}))=\E (e^{-r{\tau^*}}g(X_{\tau^*})).
\end{equation}
We assume that the payoff received in the set
$\{\omega\colon\tau(\omega)=\infty\}$ is zero, in fact, we identify
$$
e^{-r\tau}g(X_{\tau})=e^{-r\tau}g(X_{\tau})\indicator_{\{\tau<\infty\}}.
$$
In the present paper we are interested in problems with two-sided solutions, 
i.e. such that the optimal stopping rule is of the form
\begin{equation}\label{eq:tau0}
{\tau^*}=\inf\{t\geq 0\colon X_t\notin(-x_1, x_2)\},
\end{equation}
for some critical thresholds $-x_1<0<x_2$.
As the process is space-invariant, the thresholds are chosen negative and positive for simple convenience of notation.  
Observe that, having into account the asymptotic behavior of a  L\'evy process 
(see Thm. VI.12 in \cite{Bertoin}) the stopping time in \eqref{eq:tau0} satisfies
$\P({\tau^*}<\infty)=1$.

\begin{theorem}\label{theorem:1}
Consider a L\'evy process $X$,
a discount rate $r>0$, and a continuous reward function $g\colon\R\to[0,\infty)$.
Assume that there exist two points $-x_1<0<x_2$ and two continuous monotonous functions: 
$Q_1$ non-increasing with $Q_1(x)=0$ for $ -x_1\leq x$; 
$Q_2$ non-decreasing with $Q_2(x)=0$ for $x\leq x_2$
(named \emph{averaging} functions);
and such that
\begin{equation}\label{eq:equal}
g(x)=\E_x Q_1(I)+\E_x Q_2(M), \text{ for all $x\notin(-x_1,x_2)$.}
\end{equation}
Define the function
\begin{equation}\label{eq:ve}
V(x)=\E_x (Q_1(I)+ Q_2(M)), \text{ for all $x\in\R$.}
\end{equation}
Then, if the condition
\begin{equation}\label{eq:geq}
V(x)\geq g(x),
\end{equation}
holds for all $x\in[-x_1,x_2]$, the OSP \eqref{eq:osp}
has value function $V(x)$ in \eqref{eq:ve}, and \eqref{eq:tau0} is an optimal stopping time for the problem.
\end{theorem}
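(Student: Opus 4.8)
The plan is to establish the two inequalities $V(x)\ge\sup_{\tau\in\M}\E_x(e^{-r\tau}g(X_\tau))$ and $V(x)\le\E_x(e^{-r\tau^*}g(X_{\tau^*}))$, which together force equality in \eqref{eq:osp} and optimality of $\tau^*$. First I would record two structural facts. From \eqref{eq:equal} one has $V(x)=g(x)$ for every $x\notin(-x_1,x_2)$, while on $[-x_1,x_2]$ hypothesis \eqref{eq:geq} gives $V\ge g$; hence $V\ge g$ on all of $\R$, with equality on the complement of $(-x_1,x_2)$. This last remark is what converts the value $V(X_{\tau^*})$ into the reward $g(X_{\tau^*})$ at the very end.

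Step A (superharmonicity; the easy inequality). I would show that $W_2(x)=\E_x Q_2(M)$ and $W_1(x)=\E_x Q_1(I)$ are $r$-excessive, i.e.\ $e^{-rt}W_2(X_t)$ and $e^{-rt}W_1(X_t)$ are supermartingales. Writing $e^{-rt}=\P(e_r>t)$ and combining the memoryless property of $e_r$ with the Markov property, on $\{e_r>t\}$ the tail supremum $\sup_{t\le s\le e_r}X_s$ has, given $\F_t$, the law of $M$ started at $X_t$; since $\sup_{t\le s\le e_r}X_s\le M$ and $Q_2$ is non-decreasing and non-negative,
\[
\E_x\bigl[e^{-rt}W_2(X_t)\bigr]=\E_x\bigl[\indicator_{\{e_r>t\}}Q_2(\textstyle\sup_{t\le s\le e_r}X_s)\bigr]\le\E_x Q_2(M)=W_2(x),
\]
and the full supermartingale property follows by the Markov property. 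The argument for $W_1$ is identical, using that $Q_1$ is non-increasing and $\inf_{t\le s\le e_r}X_s\ge I$. Consequently $V=W_1+W_2$ is a non-negative $r$-excessive majorant of $g$, and the optional stopping theorem for non-negative supermartingales yields, for every $\tau\in\M$,
\[
\E_x\bigl(e^{-r\tau}g(X_\tau)\indicator_{\{\tau<\infty\}}\bigr)\le\E_x\bigl(e^{-r\tau}V(X_\tau)\indicator_{\{\tau<\infty\}}\bigr)\le V(x),
\]
so $V$ dominates the value function.

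Step B (attainment; the hard inequality). Here I would prove $V(x)=\E_x\bigl(e^{-r\tau^*}V(X_{\tau^*})\bigr)$ by decomposing $M$ and $I$ at $\tau^*$. The decisive observation is that before $\tau^*$ the process stays in $(-x_1,x_2)$, so $Q_2$ ignores the pre-$\tau^*$ trajectory. A short case analysis (upward exit, where $X_{\tau^*}\ge x_2$ forces $M=\sup_{\tau^*\le s\le e_r}X_s$; downward exit, where the pre-$\tau^*$ supremum does not exceed $x_2$), using the monotonicity of $Q_2$ and $Q_2\equiv0$ on $(-\infty,x_2]$, shows that off the null event $\{e_r=\tau^*\}$,
\[
Q_2(M)=\indicator_{\{e_r>\tau^*\}}\,Q_2\bigl(\textstyle\sup_{\tau^*\le s\le e_r}X_s\bigr).
\]
Applying the strong Markov property at $\tau^*$ and the memoryless property of $e_r$ (so $\P(e_r>\tau^*\mid\F_{\tau^*})=e^{-r\tau^*}$ and the post-$\tau^*$ supremum is distributed as $M$ started at $X_{\tau^*}$) gives $\E_x Q_2(M)=\E_x\bigl(e^{-r\tau^*}\E_{X_{\tau^*}}Q_2(M)\bigr)$, and symmetrically $\E_x Q_1(I)=\E_x\bigl(e^{-r\tau^*}\E_{X_{\tau^*}}Q_1(I)\bigr)$. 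Summing and invoking $V(X_{\tau^*})=g(X_{\tau^*})$ (since $X_{\tau^*}\notin(-x_1,x_2)$) yields $V(x)=\E_x\bigl(e^{-r\tau^*}g(X_{\tau^*})\bigr)$, the reverse inequality. Together with Step A this proves both assertions.

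I expect the main obstacle to be the bookkeeping in Step B: justifying the pathwise identity for $Q_2(M)$ (and the analogous one for $Q_1(I)$) at the exit time, in particular the boundary cases where the running supremum before $\tau^*$ approaches $x_2$, and confirming that $\{e_r=\tau^*\}$ is negligible. These points hinge precisely on the continuity of $Q_2$ (which forces $Q_2(x_2)=0$) and on the independence of $e_r$ from $X$; once they are secured, the strong Markov and memoryless properties do the rest.
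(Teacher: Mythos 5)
Your proposal is correct and follows essentially the same route as the paper: Step A matches the paper's Lemma \ref{lemma:two} plus Doob's optional sampling argument (the paper cites Lemma 2.2 of \cite{ChristensenEtAl} for excessivity, whereas you prove it directly via memorylessness, but the underlying computation is the same), and Step B is exactly the paper's Lemma \ref{lemma:1}, which decomposes $Q_1(I)$ and $Q_2(M)$ at $\tau^*$ using that both averaging functions vanish on the pre-$\tau^*$ path and then applies the strong Markov property together with the exponential change of variables. The pathwise identity you flag as the main obstacle is precisely what the paper secures through the observation that the pre-$\tau^*$ supremum and infimum stay in $[-x_1,x_2]$, where $Q_1=Q_2=0$.
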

As usual in optimal stopping, in order to prove Theorem \ref{theorem:1}, we verify two statements:
\begin{align}
V(x)&\geq\E( e^{-r\tau}g(X_{\tau})),
\quad\forall\tau\in\M,\label{eq:inequality}\\
&\notag\\
V(x)&=\E( e^{-r{\tau^*}}g(X_{\tau^*})).\label{eq:equality}
\end{align}
The proof of these two facts are stated in two corresponding lemmas.
Their proofs follow, with the necessary modifications,
from the  respective proofs in \cite{MordeckiMishura}. 
\begin{lemma}\label{lemma:two}
For a L\'evy process and $r>0$, consider two non-negative continuous functions: 
$f(x)$ non-decreasing; 
$g(x)$ non-increasing.
Then:
\begin{itemize}
\item[\rm (a)]
The function
\[
h(x)=\E (f({M})+g(I))\quad (x\in\R)
\]
is $r$-excessive, and, in consequence,
\item[\rm (b)]
the process
$\{e^{-{r}t}h(X_t)\colon t\geq 0\}$ is a supermartingale.
\end{itemize}
\end{lemma}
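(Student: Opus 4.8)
The plan is to prove part (a) — that $h$ is $r$-excessive — and then read off (b) from the standard correspondence between $r$-excessive functions and supermartingales. Since a sum of $r$-excessive functions is again $r$-excessive, and since passing from $X$ to the dual process $-X$ interchanges $M$ and $I$ (sending the non-increasing $g$ to the non-decreasing $y\mapsto g(-y)$ and the infimum to a supremum), it suffices to show that $h_f(x)=\E_x f(M)$ is $r$-excessive for a single non-negative non-decreasing continuous $f$. By spatial homogeneity I will write $h_f(x)=\E_0 f(x+M)$, where $M$ now denotes the overall supremum started from $0$. Recall that $h_f$ is $r$-excessive if (i) $e^{-rt}\E_x h_f(X_t)\leq h_f(x)$ for every $t\geq 0$, and (ii) $e^{-rt}\E_x h_f(X_t)\to h_f(x)$ as $t\downarrow 0$.

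The core step is the inequality (i), which I would obtain by decomposing the supremum at the fixed time $t$ and exploiting the lack of memory of the exponential clock $e_r$. Splitting according to $\{e_r\leq t\}$ and $\{e_r>t\}$ and discarding the first term by $f\geq 0$, it remains to bound the contribution of $\{e_r>t\}$. On that event I write $M=\max\!\left(S_t,\,X_t+M''\right)$, with $S_t=\sup_{0\leq s\leq t}X_s$ and $M''=\sup_{t\leq s\leq e_r}(X_s-X_t)$. The key point is that, by the Markov property and the memorylessness of $e_r$, conditionally on $\{e_r>t\}$ the variable $M''$ is distributed as $M$ and is independent of $\F_t$, while $\P(e_r>t)=e^{-rt}$. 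Using $\max(S_t,X_t+M'')\geq X_t+M''$ and the monotonicity of $f$ then gives
\[
\E_x\!\left[f(M)\,\indicator_{\{e_r>t\}}\right]\geq e^{-rt}\,\E_x\!\left[\,\E\, f(X_t+M'')\,\right]=e^{-rt}\,\E_x h_f(X_t),
\]
where the inner expectation integrates the independent copy $M''$ and reconstructs $h_f(X_t)$. This is exactly (i).

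For the limit condition (ii) I would first check that $h_f$ is continuous: since $f$ is continuous and non-decreasing, $x_n\to x$ forces $f(x_n+M)\to f(x+M)$ monotonically, so $h_f(x_n)\to h_f(x)$ by monotone convergence (assuming, as is implicit for the value function to be finite, that the expectations are finite). Combined with the right-continuity of the paths, $X_t\to x$ as $t\downarrow 0$, continuity of $h_f$ yields $\E_x h_f(X_t)\to h_f(x)$, while $e^{-rt}\to 1$; together with (i) this pins down the limit and establishes that $h_f$, hence $h$, is $r$-excessive. Part (b) is then the routine consequence: for $s\leq t$ the Markov property and (i) give
\[
\E_x\!\left[e^{-rt}h(X_t)\mid\F_s\right]=e^{-rs}\,e^{-r(t-s)}\,\E_{X_s}h(X_{t-s})\leq e^{-rs}h(X_s),
\]
with integrability from $\E_x\!\left[e^{-rt}h(X_t)\right]\leq h(x)<\infty$ and adaptedness clear, so $\{e^{-rt}h(X_t)\colon t\geq 0\}$ is a supermartingale.

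The main obstacle I anticipate is making the residual-supremum decomposition fully rigorous: precisely justifying that, conditionally on $\{e_r>t\}$, the post-$t$ supremum $M''$ is an independent copy of $M$ (a joint statement about the Markov property of the increments and the memorylessness of the exponential time), and doing so while simultaneously controlling finiteness of all the expectations involved so that the splitting and the monotonicity bound are legitimate. The rest is bookkeeping and the standard excessive-function/supermartingale dictionary.
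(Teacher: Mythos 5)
Your proof is correct in substance, but it follows a genuinely different route from the paper's. The paper settles (a) essentially by citation: Lemma 2.2 of Christensen, Salminen and Ta (2013) asserts that $x\mapsto\E_x\sup_{0\leq t\leq e_r}\phi(X_t)$ is $r$-excessive for non-negative $\phi$; monotonicity is used only to commute the payoff with the supremum, $\sup_{0\leq t\leq e_r} f(X_t)=f(M)$ and $\sup_{0\leq t\leq e_r} g(X_t)=g(I)$, so both summands of $h$ are excessive and excessivity is preserved by summation, while (b) is delegated to Shiryaev. You instead reprove the core excessivity from scratch: the infimum term is turned into a supremum term via the dual process $-X$ (the transfer $v(x)=w(-x)$ does convert excessivity of $w$ for $-X$ into excessivity of $v$ for $X$, a half-line worth stating explicitly), and the key inequality $e^{-rt}\E_x h_f(X_t)\leq h_f(x)$ is obtained by splitting on $\{e_r>t\}$, writing $M=\max\bigl(S_t,X_t+M''\bigr)$, discarding $S_t$ by monotonicity of $f$, and using that conditionally on $\{e_r>t\}$ the post-$t$ supremum $M''$ is a copy of $M$ independent of $\F_t$. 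This last point, which you rightly flag as the crux, is rigorous by Fubini over the independent clock:
$\E\bigl[f(x+X_t+M'')\indicator_{\{e_r>t\}}\bigr]=\int_t^\infty re^{-ru}\,\E\bigl[f\bigl(x+X_t+\sup_{0\leq s\leq u-t}(X_{t+s}-X_t)\bigr)\bigr]\,du=e^{-rt}\,\E\bigl[h_f(x+X_t)\bigr]$,
by stationary independent increments and the change of variable $v=u-t$. In effect you reconstruct, in the monotone case, the proof of the lemma the paper cites: your route buys self-containedness and makes visible exactly where monotonicity enters (it lets you drop the pre-$t$ supremum), while the paper's route buys brevity and rests on a more general statement that needs no monotonicity at all. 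One point to tighten: the lemma does not assume $\E_x f(M)<\infty$, so the continuity and convergence arguments you use for the limit condition (and the integrability implicit in (b)) need either that extra hypothesis or a finiteness-free substitute --- for instance, $h_f$ is lower semicontinuous by Fatou, so right-continuity of paths and Fatou give $\liminf_{t\downarrow 0}e^{-rt}\E_x h_f(X_t)\geq h_f(x)$, and your inequality supplies the matching upper bound; the paper inherits the same issue silently through its citation.
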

\begin{proof}
The fact that (b) follows from (a) is standard, see for
example \cite{Shiryaev}.
To prove (a) we know, from Lemma 2.2  in \cite{ChristensenEtAl}, that the following two functions
are excessive:
\begin{align*}
u(x):=\E_x\sup_{0\leq t\leq e_r}f(X_t)&=\E_x f\left(\sup_{0\leq t\leq e_r}X_t\right)=\E_xf(M),\\
v(x):=\E_x\sup_{0\leq t\leq e_r}g(X_t)&=\E_xg\left(\inf_{0\leq t\leq e_r}X_t\right)=\E_xg(I).\\
\end{align*}
The proof concludes as $h=u+v$, and excessivity is preserved by summation.
\end{proof}
\begin{remark}
When comparing the representations of payoffs for two-sided problems in 
Theorem 2.7 in \cite{ChristensenEtAl} and Proposition 4.4. in \cite{MordeckiSalminen},
it should be noticed that in the first case, the excessive function is $\E_x(f(M)\vee g(I))$, and in the second,
$\E_x(f(M)+g(I))$. These two constructions give different representations of the payoff of the problem.
In the present work we use the second one.
\end{remark}
\begin{lemma}\label{lemma:1}
Consider a L\'evy process $X$,
a discount rate $r>0$, and functions $Q_1,Q_2$ and $g$ such that \eqref{eq:equal} and \eqref{eq:geq} hold,
for $V$ defined by \eqref{eq:ve}. Then \eqref{eq:equality} holds.
\end{lemma}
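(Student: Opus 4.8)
The plan is to evaluate $\E_x(e^{-r\tau^*}g(X_{\tau^*}))$ by inserting the representation \eqref{eq:equal} at the exit point and then matching, term by term, against the two averaging expectations that define $V$ in \eqref{eq:ve}. Since $\P_x(\tau^*<\infty)=1$ and $X_{\tau^*}\notin(-x_1,x_2)$, hypothesis \eqref{eq:equal} gives $g(X_{\tau^*})=\E_{X_{\tau^*}}Q_1(I)+\E_{X_{\tau^*}}Q_2(M)$, so by the strong Markov property
\[
\E_x(e^{-r\tau^*}g(X_{\tau^*}))=\E_x\bigl(e^{-r\tau^*}\E_{X_{\tau^*}}Q_1(I)\bigr)+\E_x\bigl(e^{-r\tau^*}\E_{X_{\tau^*}}Q_2(M)\bigr).
\]
It therefore suffices to prove the two identities $\E_xQ_2(M)=\E_x(e^{-r\tau^*}\E_{X_{\tau^*}}Q_2(M))$ and $\E_xQ_1(I)=\E_x(e^{-r\tau^*}\E_{X_{\tau^*}}Q_1(I))$; adding them yields \eqref{eq:equality}.

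I will prove the first identity, the second being symmetric after exchanging supremum for infimum and the roles of the two thresholds. The main step is a decomposition of the overall supremum $M$ across the stopping time $\tau^*$. Write $M_{\mathrm{pre}}=\sup\{X_t\colon 0\le t<\tau^*\}$ and, on $\{\tau^*\le e_r\}$, $M_{\mathrm{post}}=\sup\{X_t\colon \tau^*\le t\le e_r\}$. Because $X_t\in(-x_1,x_2)$ for every $t<\tau^*$, we have $M_{\mathrm{pre}}\le x_2$, and since $Q_2$ is non-decreasing with $Q_2\equiv 0$ on $(-\infty,x_2]$, it follows that $Q_2(M_{\mathrm{pre}})=0$. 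A short case analysis on whether $M_{\mathrm{post}}\ge x_2$ then shows $Q_2(M)=Q_2(\max(M_{\mathrm{pre}},M_{\mathrm{post}}))=Q_2(M_{\mathrm{post}})$ on $\{\tau^*\le e_r\}$, while on $\{\tau^*>e_r\}$ the supremum is taken over a time interval on which the path stays below $x_2$, so that $Q_2(M)=0$ there. Hence $Q_2(M)=\indicator_{\{\tau^*\le e_r\}}Q_2(M_{\mathrm{post}})$ almost surely.

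It then remains to compute the expectation of the right-hand side. Conditioning on $\F_{\tau^*}$ and using the lack-of-memory of the exponential time $e_r$, the event $\{\tau^*\le e_r\}$ has conditional probability $e^{-r\tau^*}$, and on this event the residual time $e_r-\tau^*$ is again exponential of parameter $r$ and independent of $\F_{\tau^*}$. By the strong Markov property the post-$\tau^*$ process is a L\'evy process started afresh at $X_{\tau^*}$, so $M_{\mathrm{post}}$ has, conditionally, the law of the overall supremum $M$ under $\P_{X_{\tau^*}}$. This gives $\E_x(\indicator_{\{\tau^*\le e_r\}}Q_2(M_{\mathrm{post}}))=\E_x(e^{-r\tau^*}\E_{X_{\tau^*}}Q_2(M))$, which is the required identity. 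The main obstacle is precisely the supremum decomposition: one must argue cleanly that the pre-exit maximum is invisible to $Q_2$---this is where the support condition $Q_2\equiv 0$ on $(-\infty,x_2]$ together with monotonicity is essential---and that the continuity of $e_r$ renders the boundary $\{\tau^*=e_r\}$ negligible, so that the split of the time horizon at $\tau^*$ is exact.
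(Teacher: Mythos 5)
Your proposal is correct and takes essentially the same route as the paper: both arguments first replace $g(X_{\tau^*})$ by the representation \eqref{eq:equal}, then kill the pre-$\tau^*$ contribution to the extrema using that $Q_1,Q_2$ vanish on $[-x_1,x_2]$ (your $Q_2(M)=\indicator_{\{\tau^*\le e_r\}}Q_2(M_{\mathrm{post}})$ is the paper's identity $\sup_{0\le t\le e_r}Q_2(X_t)=\sup_{\tau^*\le t\le e_r}Q_2(X_t)$), and finally invoke the strong Markov property together with the memorylessness of $e_r$, which the paper implements as an explicit change-of-variables computation against the density $re^{-ru}\,du$ rather than as a conditioning argument.
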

\begin{proof}
Denote $S=\R\setminus(-x_1,x_2)$. 
As $X_{\tau^*}\in S$ and $V=g$ on $S$, we have
\begin{equation}\label{eq:vege}
\E_x(e^{-r{\tau^*}}g(X_{\tau^*}))=\E_x(e^{-r{\tau^*}}V(X_{\tau^*})).
\end{equation}
On the other side, 
$$
V(x)=\E_x (Q_1(I)+Q_2(M))=\E_x\left(\sup_{0\leq t\leq e_r}Q_1(X_t)+\sup_{0\leq t\leq e_r}Q_2(X_t)\right).
$$
Observe that if either $e_r<{\tau^*}$ or $t<{\tau^*}$, we have $M<x_2$ and $I>-x_1$, and then 
\begin{equation*}
\sup_{0\leq t\leq e_r}Q_1(X_t)+\sup_{0\leq t\leq e_r}Q_2(X_t)=0,
\end{equation*}
because $Q_1(x)=Q_2(x)=0$ in $[-x_1,x_2]$.
So, denoting
\begin{align}
V_1(x)&:=\E_x\left(\sup_{{\tau^*}\leq t\leq e_r}Q_1(X_t)\right)=\E_xQ_1(I),\label{eq:inf}\\
V_2(x)&:=\E_x\left(\sup_{{\tau^*}\leq t\leq e_r}Q_2(X_t)\right)=\E_xQ_2(M).\label{eq:sup}
\end{align}
we have
$$
V(x)=V_1(x)+V_2(x).
$$
Consider now $\tilde{X}=\{\tilde{X}_s=X_{{\tau^*}+s}-X_{\tau^*}\colon s\geq 0\}$ 
that, by the strong Markov property,
is independent of $\F_{\tau^*}$ and has the same distribution as $X$, 
and denote by $\tilde{\E}_x$ the expectation w.r.t. $\tilde{X}$.
Based on these considerations, we have
\begin{align}
V_1(x)&=\E_x\left(\sup_{{\tau^*}\leq t\leq e_r}Q_1(X_t)\right)
=\E_x\left(\int_{\tau^*}^{\infty}\sup_{{\tau^*}\leq t\leq u}Q_1(X_t)re^{-ru}du\right)\label{eq:u}\\
&=\E_x\left(e^{-r{\tau^*}}\int_0^{\infty}\sup_{{\tau^*}\leq t\leq {\tau^*}+v}Q_1(X_t)re^{-rv}dv\right)\label{eq:v}\\
&=\E_x\left(e^{-r{\tau^*}}\int_0^{\infty}\sup_{{\tau^*}\leq t\leq {\tau^*}+v}Q_1(X_{\tau^*}+X_t-X_{\tau^*})re^{-rv}dv\right)\label{eq:t}\\
&=\E_x\left(e^{-r{\tau^*}}\int_0^{\infty}\sup_{0\leq s\leq v}Q_1(X_{\tau^*}+X_{{\tau^*}+s}-X_{\tau^*})re^{-rv}dv\right)\label{eq:s}\\
&=\E_x\left(e^{-r{\tau^*}}\int_0^{\infty}\sup_{0\leq s\leq v}Q_1(X_{\tau^*}+\tilde{X}_s)re^{-rv}dv\right)\notag\\
&=\E_x\left(e^{-r{\tau^*}}\tilde{\E}_{X_{\tau^*}}\left[\int_0^{\infty}\sup_{0\leq s\leq v}Q_1(\tilde{X}_s)re^{-rv}dv\right]\right)\notag\\
&=\E_x\left(e^{-r{\tau^*}}\tilde{\E}_{X_{\tau^*}}\left[\sup_{0\leq s\leq e_r}Q_1(\tilde{X}_s)\right]\right)=
\E_x\left(e^{-r{\tau^*}}V_1(X_{\tau^*})\right),\notag
\end{align}
where we change variables according to $v=u-{\tau^*}$ to pass from \eqref{eq:u} to \eqref{eq:v},
and denote $s=t-{\tau^*}$ to pass from \eqref{eq:t} to \eqref{eq:s}. 
The same relation holds with $V_2$ and $Q_2$.
Summing up these two relations, and in view of \eqref{eq:vege},
we conclude the proof of the Lemma.
\end{proof}
\begin{proof}[Proof of Theorem \ref{theorem:1}] The proof now follows, as, according to Lemma \ref{lemma:two}
the non-negative process $\{e^{-rt}V(X_t)\colon t\geq 0\}$  is  a supermartingale, giving 
$$
V(x)\geq\E( e^{-r\tau}V(X_{\tau}))\geq\E( e^{-r\tau}g(X_{\tau})),
$$
by Doob's optional sampling Theorem first and the application of condition \eqref{eq:geq} second.
This establishes \eqref{eq:inequality}.
On its turn, Lemma \ref{lemma:1} gives equality \eqref{eq:equality}, concluding the proof.
\end{proof}
\section{On the smooth pasting condition}\label{section:angle}
Smooth pasting results for general L\'evy processes were obtained for put American perpetual options for L\'evy processes in \cite{AliliKyprianou}, and afterwards generalized to put-type (bounded) payoffs by \cite{Surya}. 
The following result gives some natural necessary conditions for smooth pasting, that
depend on the exponential moments of the process and the behaviour of the averaging function.
They can be applied both to one-sided problems (as the ones considered in \cite{MordeckiMishura})
and to the two sided problems considered in the present paper.
\begin{theorem}\label{theorem:angle}
Consider a L\'evy process $X$,
a discount rate $r>0$ and a continuous reward function $g\colon\R\to[0,\infty)$.
Assume that there exist a point $x_0$ and a continuous non-decreasing averaging function 
$Q$ with $Q(x)=0$ for $x\leq x_0$,
such that
\begin{equation*}
g(x)=\E_x Q(M), \text{ for all $x\geq x_0$.}
\end{equation*}
Assume that $Q\in C^2[x_0,\infty)$,  and satisfies
\begin{equation*}
|Q''(x)|\leq Ae^{\alpha x},\ \forall x\geq x_0,
\end{equation*}
for some $A>0$ and some $\alpha>0$.
Regarding the process, assume that 
\begin{equation}\label{eq:r}
\E e^{\alpha X_1}<e^{r}.
\end{equation}
Then, the candidate to value function of the OSP \eqref{eq:osp}
\begin{equation*}
V(x)=\E_x Q(M), \text{ for all $x\in\R$}
\end{equation*}
satisfies
\begin{equation}\label{eq:smooth}
V'(x_0+)-V'(x_0-)=Q'(x_0+)\P(M=0),
\end{equation}
and
\begin{equation}\label{eq:smooth2}
V'(x+)-V'(x-)=0,\quad\text{for $x>x_0$}.
\end{equation}

\end{theorem}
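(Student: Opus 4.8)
The plan is to exploit spatial homogeneity to turn $V$ into an integral of the kinked function $Q$ against the law of the supremum, and then to differentiate under the expectation; the angle of $V$ is produced precisely by the angle of $Q$ at $x_0$, weighted by the mass that $M$ places on the relevant level. First I would write $M$ for the overall supremum $\sup\{X_t\colon 0\le t\le e_r\}$ computed under $\P_0$, so that $M\ge 0$; by space-invariance of $X$ one has
\[
V(x)=\E_x Q(M)=\E\,Q(x+M),\qquad x\in\R .
\]
The possible atom $p_0:=\P(M=0)$ is exactly the quantity appearing in \eqref{eq:smooth}.

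The single quantitative ingredient is an exponential moment bound: I would show that hypothesis \eqref{eq:r} implies $\E\,e^{\alpha M}<\infty$. This is a standard consequence of fluctuation theory; concretely, \eqref{eq:r} gives $e^{-rt}\E\,e^{\alpha X_t}=(e^{-r}\E\,e^{\alpha X_1})^{t}\to 0$ geometrically, which makes the relevant Wiener--Hopf (Fristedt) integral for the Laplace exponent of $M$ converge. Combined with the hypothesis $|Q''|\le A e^{\alpha x}$, integrated twice, this yields $|Q'(x)|\le C e^{\alpha x}$ on $[x_0,\infty)$ and $Q'\equiv 0$ on $(-\infty,x_0)$, so that for each fixed $x$ and $0<\epsilon\le 1$ the one-sided difference quotients of $Q(x+\,\cdot\,)$ are dominated by $\sup_{0\le s\le 1}|Q'(x+s+M)|\le C'e^{\alpha M}$, an integrable bound.

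With this domination in hand I would differentiate $V$ one-sidedly under the expectation. Since $Q$ is continuous, of class $C^2$ on $[x_0,\infty)$ and identically zero on $(-\infty,x_0]$, its one-sided derivatives exist everywhere and satisfy $Q'(y+)=Q'(y-)$ for every $y\ne x_0$, while at the kink $Q'(x_0-)=0$ and $Q'(x_0+)$ is the value coming from the $C^2$ branch. Dominated convergence applied to the difference quotients then gives
\[
V'(x+)=\E\,Q'\big((x+M)+\big),\qquad V'(x-)=\E\,Q'\big((x+M)-\big),\qquad x\in\R .
\]

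Subtracting these two identities, the integrand $Q'((x+M)+)-Q'((x+M)-)$ vanishes off the kink and equals $Q'(x_0+)$ on the event $\{x+M=x_0\}$, whence the single formula
\[
V'(x+)-V'(x-)=Q'(x_0+)\,\P(M=x_0-x).
\]
Specialising closes both cases at once: taking $x=x_0$ gives $\P(M=0)$ and hence \eqref{eq:smooth}, while for $x>x_0$ we have $x_0-x<0\le M$, so $\P(M=x_0-x)=0$ and \eqref{eq:smooth2} follows. I expect the moment estimate $\E\,e^{\alpha M}<\infty$ to be the only real obstacle: it is where \eqref{eq:r} enters and what legitimises the interchange of derivative and expectation, the remainder being bookkeeping with one-sided derivatives and the transport of the kink of $Q$ to the level $x_0-x$.
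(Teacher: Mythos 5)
Your proposal is correct, and it takes a genuinely different route from the paper. The paper never differentiates under the expectation: it computes the angle directly as the limit of the symmetric second difference $\frac1h\E\left[Q(x_0+h+M)+Q(x_0-h+M)-2Q(x_0+M)\right]$, splitting the law of $M$ into the atom at $0$ (which produces the term $Q'(x_0+)\P(M=0)$), the small band $(0,h)$ (killed using $Q(x_0)=0$), and the tail $(h,\infty)$, where the hypothesis $|Q''|\leq Ae^{\alpha x}$ is used through the representation $Q(x+h)+Q(x-h)-2Q(x)=\int_x^{x+h}du\int_{u-h}^u Q''(v)\,dv$ together with $\E e^{\alpha M}<\infty$; the case $x>x_0$ is then re-run with $x$ in place of $x_0$. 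You instead integrate the $Q''$ bound once to get $|Q'|\leq Ce^{\alpha x}$ on $[x_0,\infty)$, dominate the first-order difference quotients by $C'e^{\alpha M}$, and apply dominated convergence to obtain the closed formulas $V'(x\pm)=\E\,Q'((x+M)\pm)$, whence the unified identity $V'(x+)-V'(x-)=Q'(x_0+)\P(M=x_0-x)$ settles both \eqref{eq:smooth} and \eqref{eq:smooth2} in one stroke. Your route buys two things the paper's does not make explicit: it proves that the one-sided derivatives of $V$ actually exist (the paper's opening identity tacitly presupposes this, since a symmetric second-difference limit by itself need not equal a difference of one-sided derivatives), and it localizes the angle at every $x$, not just at $x_0$ and above. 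Two small points to tighten: the domination of difference quotients by $\sup|Q'|$ needs the remark that $Q$ is absolutely continuous despite the kink at $x_0$ (so $|Q(b)-Q(a)|\leq\int_a^b|Q'|$ with $Q'$ defined off a single point); and for the key moment estimate $\E e^{\alpha M}<\infty$ you should simply invoke Lemma 1 of Mordecki (2002) applied to $\alpha X$, as the paper does, rather than gesture at Fristedt's formula --- it is the same fact, and the endpoint case $\beta=\alpha$ is exactly what that lemma, using the strict inequality in \eqref{eq:r}, delivers.
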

\begin{proof}
We first prove \eqref{eq:smooth}.
Denoting by $F_M(y)\ (y\geq 0)$ the distribution function of $M$, we have
\begin{align}
V'(&x_0+)-V'(x_0-)\notag\\
&=\lim_{h\downarrow 0}\frac1h
\E[Q(x_0+h+M)+Q(x_0-h-M)-2Q(x_0+M)]\notag\\
&=\lim_{h\downarrow 0}\frac1h\int_{[0,\infty)}\left[Q(x_0+h+y)+Q(x_0-h-y)-2Q(x_0+y)\right]dF_M(y)\notag\\
&=\lim_{h\downarrow 0}\frac1h
\Big\{
[Q(x_0+h)+Q(x_0-h)-2Q(x_0)]\P(M=0)\Big.\label{eq:aa}\\
&\quad+\int_{(0,h)}
\left[Q(x_0+h+y)+Q(x_0-h+y)-2Q(x_0+y)\right]dF_M(y)\label{eq:bb}\\
\Big.&\quad+\int_{(h,\infty)}
\left[Q(x_0+h+y)+Q(x_0-h+y)-2Q(x_0+y)\right]dF_M(y)\Big\}.\label{eq:cc}
\end{align}
To compute the limit in \eqref{eq:aa}, as $Q(x)=0$ for $x\leq x_0$, we have
$$
\lim_{h\downarrow 0}\frac1h
[Q(x_0+h)+Q(x_0-h)-2Q(x_0)]\P(M=0)=Q'(x_0+)\P(M=0).
$$
Concerning \eqref{eq:bb}, we have
\begin{multline*}
\lim_{h\downarrow 0}\frac1h\left|\int_{(0,h)}
\left[Q(x_0+h+y)+Q(x_0-h+y)-2Q(x_0+y)\right]dF_M(y)\right|\\
\leq
4\lim_{h\downarrow 0}\frac{Q(x_0+2h)}h\P(0<M\leq h)=
8Q(x_0)'\lim_{h\downarrow 0}\P(0<M\leq h)=0.
\end{multline*}
To consider the term in \eqref{eq:cc}, denote $x=x_0+y\geq h$,
\begin{multline*}
|Q(x+h)+Q(x-h)-2Q(x)|
=\left|\int_{x}^{x+h}du\int_{u-h}^{u}Q''(v)dv\right|\\
\leq A\int_{x}^{x+h}du\int_{u-h}^{u}e^{\alpha v}dv
={A\over\alpha^2}
\left(
e^{\alpha(x+h)}+e^{\alpha(x-h)}-2e^{\alpha x}
\right).
\end{multline*}
We now apply Lemma 1 in \cite{Mordecki:poli} to the L\'evy process $\alpha X$,
to obtain that condition \eqref{eq:r} implies that
$\E e^{\alpha M}<\infty$.
In consequence, we have
\begin{multline*}
\left|\int_{(h,\infty)}
\left[Q(x_0+h+y)+Q(x_0-h+y)-2Q(x_0+y)\right]dF_M(y)\right|\\
\leq 
{A\over\alpha^2}
\E\left(e^{x_0+h+M}+e^{x_0-h+M}-2e^{x_0+M}\right)\\
\leq 
{A\over\alpha^2}
\left(e^{x_0+h}+e^{x_0-h}-2e^{x_0}\right)\E e^{\alpha M}.
\end{multline*}
In conclussion
$$
\lim_{h\downarrow 0}\int_{(h,\infty)}
\left[Q(x_0+h+y)+Q(x_0-h+y)-2Q(x_0+y)\right]dF_M(y)=0,
$$
concluding the proof of \eqref{eq:smooth}. To verify \eqref{eq:smooth2} the same computations apply with $x$ instead of $x_0$.
The difference is that, in the term \eqref{eq:aa}, we have now
\begin{multline*}
\lim_{h\downarrow 0}\frac1h
[Q(x+h)+Q(x-h)-2Q(x)]\P(M=0)\\=(Q'(x+)-Q'(x-))\P(M=0)=0.
\end{multline*}
This concludes the proof of the Theorem.
\end{proof}
\section{An application}\label{section:example}
To illustrate our results, we consider a compound Poisson process $X=\{X_t\colon t\geq 0\}$ with double-sided exponential jumps, given by
\begin{equation}\label{eq:lp}
X_t=x-\sum_{i=1}^{N^{(1)}_t}Y^{(1)}_i+\sum_{i=1}^{N^{(2)}_t}Y^{(2)}_i,
\end{equation}
where $N^{(1)}=\{N^{(1)}_t\colon t\geq 0\}$ and 
$N^{(2)}=\{N^{(2)}_t\colon t\geq 0\}$ 
are two Poisson processes with respective positive intensities $\lambda_1,\lambda_2$;
$Y^{(1)}=\{Y^{(1)}_i\colon i\geq 1\}$ and $Y^{(2)}=\{Y^{(2)}_i\colon i\geq 1\}$ 
are two sequences of independent exponentially distributed random variables with respective positive parameters 
$\alpha_1,\alpha_2$.
The four processes  $N^{(1)}$, $N^{(2)}$, $Y^{(1)}$, $Y^{(2)}$, are independent.
We consider then the OSP \eqref{eq:osp} for the function  $g(x)=|x|$ and the process $X$.
\subsection{Wiener-Hopf factorization}
The characteristic exponent of $X$ is
$$
\psi(z)=-\lambda_1{z\over\alpha_1+z}+\lambda_2{z\over\alpha_2-z}.
$$
Denote by 
$
-r_1,r_2
$
the roots of the equation $\psi(z)=r$,  that satisfy
$$
-\alpha_1<-r_1<0<r_2<\alpha_2.
$$
We apply the Wiener-Hopf factorization to determine the law of $M$ and $I$
(we do this directly, alternatively we can use the results in \cite{LewisMordecki}).
\begin{equation*}
{r\over r-\psi(z)}
={r_1r_2(\alpha_1+z)(\alpha_2-z)\over \alpha_1\alpha_2(r_1+z)(r_2-z)}.\\
\end{equation*}
In conclusion, due to the uniqueness of the factorization (see Thm. 5(ii) Ch. VI of \cite{Bertoin}),
we obtain
\begin{equation*}
\E e^{zI}={r_1\over\alpha_1}{\alpha_1+z\over r_1+z}={r_1\over\alpha_1}+{\alpha_1-r_1\over\alpha_1}{r_1\over r_1+z},
\end{equation*}
and
\begin{equation*}
\E e^{zM}={r_2\over\alpha_2}{\alpha_2-z\over r_2-z}={r_2\over\alpha_2}+{\alpha_2-r_2\over\alpha_2}{r_2\over r_2-z}.
\end{equation*}
This means that the random variables $M$ and $I$ have defective exponential distributions
with parameters $r_2$ and $-r_1$,
and atoms at zero of respective size $r_2/\alpha_2$ and $r_1/\alpha_1$. 
With a slight abuse of notation,
we denote the respective densities
\begin{align*}
f_I(x)&={r_1\over\alpha_1}\delta_0(x)+{\alpha_1-r_1\over\alpha_1}r_1e^{r_1x},\quad x\leq 0,\\
f_M(x)&={r_2\over\alpha_2}\delta_0(x)+{\alpha_2-r_2\over\alpha_2}r_2e^{-r_2x},\quad x\geq 0.,
\end{align*}
where $\delta_0(x)dx$ denotes the Dirac mass measure at $x=0$.
In order to introduce our result, we also need the following notations.
\begin{align}
E_1&=-\E_0 I=\frac1{r_1}-\frac1\alpha_1>0, 	&E_2	&=\E_0 M=\frac1{r_2}-\frac1\alpha_2>0,\label{eq:e1e2}\\
F_1&=(\E e^{r_2I})^{-1}={\alpha_1\over r_1}{r_1+r_2\over \alpha_1+r_2}>1,&F_2&=(\E e^{-r_1M})^{-1}={\alpha_2\over r_2}{r_1+r_2\over r_1+\alpha_2}>1.\label{eq:f1f2}\\
G_1&=F_1-1={r_2(\alpha_1-r_1)\over r_1(\alpha_1+r_2)}>0,	&G_2&=F_2-1={r_1(\alpha_2-r_2)\over r_2(r_1+\alpha_2)}>0.\label{eq:g1g2}
\end{align}
\begin{theorem}\label{theorem:2}
Consider the L\'evy process $X$ in \eqref{eq:lp}, the payoff function $g(x)=|x|$, and $r>0$.
Denote
\begin{equation}\label{eq:x1}
x_1={E_1(1-e^{-(r_1+r_2)u})+F_1ue^{-r_2u}\over 1+G_1e^{-(r_1+r_2)u}+F_1e^{-r_2u}},
\end{equation}
\begin{equation}\label{eq:x2}
x_2={E_2(1-e^{-(r_1+r_2)u})+F_2ue^{-r_1u}\over 1+G_2e^{-(r_1+r_2)u}+F_2e^{-r_1u}},
\end{equation}
where $u$ is the unique root of the equation
\begin{equation}\label{eq:u2}
u={E_1 +E_2+(E_1G_2+E_2G_1)e^{-(r_1+r_2)u}+E_1F_2e^{-r_1u}+E_2F_1e^{-r_2u}\over1-G_1G_2e^{-(r_1+r_2)u}}.
\end{equation}

Denote
\begin{equation*}
D_1={x_1-x_2e^{-r_2(x_1+x_2)}\over 1-e^{-(r_1+r_2)(x_1+x_2)}},	
\qquad
D_2={x_2-x_1e^{-r_1(x_1+x_2)}\over 1-e^{-(r_1+r_2)(x_1+x_2)}}.	
\end{equation*}
Then, the value function
\begin{equation}\label{eq:ve2}
V(x)=
\begin{cases}
-x,						&\text{ for $x< -x_1$},\\
D_1e^{-r_1(x+x_1)}+D_2e^{r_2(x-x_2)},	&\text{ for $-x_1\leq x\leq x_2$},\\
x,						&\text{ for $x_2<x$},
\end{cases}
\end{equation}
and stopping time defined in \eqref{eq:tau0}  conform the solution of the OSP \eqref{eq:osp}.
\end{theorem}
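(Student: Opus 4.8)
The plan is to apply the verification Theorem~\ref{theorem:1}. Concretely, I must exhibit thresholds $-x_1<0<x_2$ together with averaging functions $Q_1$ (non-increasing, vanishing on $[-x_1,\infty)$) and $Q_2$ (non-decreasing, vanishing on $(-\infty,x_2]$) for which the representation \eqref{eq:equal} holds off $(-x_1,x_2)$, then check the inequality \eqref{eq:geq} on $[-x_1,x_2]$, and finally identify the resulting function $V$ of \eqref{eq:ve} with the closed form \eqref{eq:ve2}. Throughout I use space-homogeneity to write $M=x+M_0$ and $I=x+I_0$ under $\P_x$, where $M_0,I_0$ have the defective exponential laws $f_M,f_I$ found above, with atoms $\P(M=0)=r_2/\alpha_2$ and $\P(I=0)=r_1/\alpha_1$ at the origin.

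First I would construct the candidate averaging functions by inverting \eqref{eq:equal}. For $x\ge x_2$ the function $Q_1$ is supported to the left of $-x_1$, so $\E_xQ_1(I)=K_1e^{-r_1x}$, where $K_1:=\tfrac{\alpha_1-r_1}{\alpha_1}r_1\int_{-\infty}^{-x_1}Q_1(w)e^{r_1w}\,dw$ is a constant depending on $Q_1$ only through this integral. Hence \eqref{eq:equal} on $[x_2,\infty)$ is equivalent to $\E_xQ_2(M)=x-K_1e^{-r_1x}$, and solving the first-order linear relation obtained by differentiating $\E_xQ_2(M)$ (equivalently, integrating against $f_M$) yields the explicit form
\[
Q_2(x)=(x-E_2)-K_1F_2e^{-r_1x},\qquad x\ge x_2,
\]
the homogeneous mode $e^{\alpha_2 x}$ being discarded since it would make $\E_xQ_2(M)$ diverge. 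A symmetric computation on $(-\infty,-x_1]$ gives $Q_1(x)=(-x-E_1)-K_2F_1e^{r_2x}$ with $K_2:=\tfrac{\alpha_2-r_2}{\alpha_2}r_2\int_{x_2}^{\infty}Q_2(w)e^{-r_2w}\,dw$.

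Imposing continuity of the averaging functions at the thresholds, $Q_2(x_2)=0$ and $Q_1(-x_1)=0$, together with the two self-consistency identities defining $K_1,K_2$ as the integrals of the just-obtained $Q_2,Q_1$, produces a closed system of four equations in $x_1,x_2,K_1,K_2$. Eliminating $K_1,K_2$ through the continuity relations $K_1=(x_2-E_2)e^{r_1x_2}/F_2$ and $K_2=(x_1-E_1)e^{r_2x_1}/F_1$ leaves two equations in $x_1,x_2$; introducing $u=x_1+x_2$, these separate into the expressions \eqref{eq:x1}--\eqref{eq:x2} for $x_1,x_2$ as functions of $u$, and substituting back yields the single scalar equation \eqref{eq:u2}. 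For existence and uniqueness of a positive root I would argue by monotonicity: the denominator $1-G_1G_2e^{-(r_1+r_2)u}$ stays positive because $G_1G_2<1$, the right-hand side of \eqref{eq:u2} is bounded as $u\to\infty$ whereas the left-hand side is unbounded, and the right-hand side exceeds $u$ at $u=0$, so a crossing exists; one then checks the relevant difference is strictly monotone to conclude uniqueness. The signs $K_1,K_2\ge0$ emerging from the solution make $Q_2$ non-decreasing and $Q_1$ non-increasing (and both non-negative), as required by Lemma~\ref{lemma:two}.

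Finally I would compute $V$ and verify \eqref{eq:geq}. On $[-x_1,x_2]$ the supports of $Q_1,Q_2$ give $\E_xQ_1(I)=K_1e^{-r_1x}$ and $\E_xQ_2(M)=K_2e^{r_2x}$, so $V(x)=K_1e^{-r_1x}+K_2e^{r_2x}$; writing $D_1=K_1e^{r_1x_1}$, $D_2=K_2e^{r_2x_2}$ and using the endpoint continuity $V(-x_1)=x_1$, $V(x_2)=x_2$ (a $2\times2$ linear system) recovers \eqref{eq:ve2} with the stated $D_1,D_2$. Since $D_1,D_2>0$, $V$ is convex on $[-x_1,x_2]$ and meets $g(x)=|x|$ at the two endpoints, so \eqref{eq:geq} reduces by convexity to the one-sided slope conditions $V'(x_2-)\le1=g'(x_2-)$ and $V'(-x_1+)\ge-1=g'(-x_1+)$, i.e. to the angle at each threshold having the correct sign (no smooth pasting, consistent with Theorem~\ref{theorem:angle} and $\P(M=0)>0$). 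With all hypotheses of Theorem~\ref{theorem:1} in force, the conclusion follows. I expect the main obstacle to be the algebraic collapse of the four-equation system to the single transcendental equation \eqref{eq:u2} with its unique-root analysis, together with the two boundary slope inequalities; both are elementary but lengthy, and it is the sign bookkeeping (positivity of $D_1,D_2,K_1,K_2$ and the correct orientation of the angles) that makes them delicate.
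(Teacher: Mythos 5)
Your proposal is correct and, for the constructive half, is essentially the paper's own proof: the paper also feeds Theorem \ref{theorem:1} with exponential-type averaging functions ($Q_2(x)=x-E_2-F_2D_1e^{-r_1(x+x_1)}$ on $[x_2,\infty)$ and the symmetric $Q_1$), obtains the thresholds from continuity $Q_1(-x_1)=Q_2(x_2)=0$ via \eqref{eq:iff}, identifies $u=x_1+x_2$ as the root of \eqref{eq:u2}, and gets $D_1,D_2$ from the linear system \eqref{eq:system}; your derivation of the $Q_i$ by inverting the integral equation (discarding the divergent $e^{\alpha_2 x}$ mode) is the same algebra run in the reverse, more motivated direction, with the caveat that the inversion only produces a candidate, which must still be checked by substitution --- that check is exactly the paper's computation of $\E_xQ_1(I)+\E_xQ_2(M)$ for $x\notin(-x_1,x_2)$. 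The genuine divergence is the verification of \eqref{eq:geq}. The paper never uses convexity or boundary slopes: it writes $Q_2(y)=\bigl(y-E_2-F_2D_1e^{-r_1(y+x_1)}\bigr)^+$ (legitimate once $D_1>0$ makes the expression increasing), drops the positive part inside $\E_xQ_2(M)$, and uses $\E_0M=E_2$ and $F_2\E_0e^{-r_1M}=1$ to get $\E_xQ_2(M)\geq x-\E_xQ_1(I)$ for $0\leq x\leq x_2$ (symmetrically on $[-x_1,0]$) --- a three-line probabilistic estimate whose only input is $D_1,D_2>0$. Your supporting-line argument for convex $V$ is also valid, but it incurs an extra obligation the paper avoids: the slope bounds $V'(x_2-)\leq 1$, $V'(-x_1+)\geq -1$, which you assert rather than prove. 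They do hold, e.g. by Theorem \ref{theorem:angle} applied to $\E_xQ_2(M)$ (here $Q_2''$ is bounded, so a small $\alpha>0$ satisfies \eqref{eq:r}), which together with $V(x)=x$ for $x>x_2$ gives $V'(x_2-)=1-Q_2'(x_2+)\P(M=0)<1$; but this step, and likewise the positivity of $D_1,D_2$ (equivalently $K_1,K_2>0$, equivalently $x_1>E_1$ and $x_2>E_2$), must actually be carried out, since your convexity reduction collapses without them. The paper extracts $D_1>0$ from the identity \eqref{eq:d1} found while checking \eqref{eq:equal}; in your set-up it follows neatly from \eqref{eq:u2}: using $F_i=1+G_i$ one finds $u\geq E_1(1+e^{-r_1u})+E_2(1+e^{-r_2u})$, and $x_1-E_1$, $x_2-E_2$ have precisely the signs of $u-E_1(1+e^{-r_1u})$ and $u-E_2(1+e^{-r_2u})$. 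In summary: same construction and same threshold equations; a genuinely different, geometrically transparent but somewhat longer majorization step, versus the paper's shorter positive-part trick; your plan is complete only once the flagged slope and positivity verifications are written out.
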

\begin{remark}An application of Theorem \ref{theorem:angle} (or more directly in this case, 
the computation of the corresponding derivatives in formula \eqref{eq:ve2})
show that the smooth pasting condition does not hold in any of the thresholds of the problem: the averaging functions
have non-vanishing derivatives at the roots, and both the maximum and the infimum have atoms at the origin.
\end{remark}
\begin{proof}
We first verify that in fact equation \eqref{eq:u2} has only one positive root, as the r.h.s. decreases from
$$
{2(E_1F_2+E_2F_1)\over1-G_1G_2}>E_1+E_2>0,
$$
when $u=0$ to
$
E_1 +E_2,
$
as $u\to\infty$. Here it was used that $G_1G_2<1$, fact that follows directly from the definitions \eqref{eq:g1g2}.
Furthermore, after some computations, it can be checked that 
$u$ is a root of \eqref{eq:u2} if and only if it is a root of the equation
$$
u={E_1(1-e^{-(r_1+r_2)u})+F_1ue^{-r_2u}\over 1+G_1e^{-(r_1+r_2)u}+F_1e^{-r_2u}}+{E_2(1-e^{-(r_1+r_2)u})+F_2ue^{-r_1u}\over 1+G_2e^{-(r_1+r_2)u}+F_2e^{-r_1u}},
$$
so, according to definitions  \eqref{eq:x1} and \eqref{eq:x2}, we obtain that $u=x_1+x_2$. 
In order to apply Theorem \ref{theorem:1}, introduce the functions
\begin{align}\notag
Q_1(x)&=
\begin{cases}
-x-E_1-F_1D_2e^{r_2(x-x_2)},	&\text{ for $x\leq -x_1$},\\
0,					&\text{ for $x> -x_1$},
\end{cases}\\
Q_2(x)&=
\begin{cases}
0,					&\text{ for $x\leq x_2$},\\
x-E_2-F_2D_1e^{-r_1(x+x_1)},	&\text{ for $x\geq x_2$},\label{eq:q2}
\end{cases}
\end{align}
We prove that these two functions are monotonous, non-increasing and non-decreasing respectively, and continuous.
It can be checked that 
\begin{equation*}
x_1={E_1(1-e^{-(r_1+r_2)u})+F_1ue^{-r_2u}\over 1+G_1e^{-(r_1+r_2)u}+F_1e^{-r_2u}},
\end{equation*}
if and only if
\begin{equation}\label{eq:iff}
x_1=E_1+F_1{x_2-x_1e^{-r_1u}\over 1-e^{-{(r_1+r_2)}u}}e^{-r_2u}=E_1+F_1D_2e^{-r_2(x_1+x_2)},
\end{equation}
and this last statement \eqref{eq:iff} is equivalent to $Q_1(-x_1)=0$, giving the continuity of $Q_1$, that is clearly positive and non-increasing.
Identical arguments apply to $Q_2$.

To check equality \eqref{eq:equal}, consider first $x\geq -x_1$ and compute
\begin{multline*}
\E_xQ_1(I)=\int_{(-\infty,0]}Q_1(x+y)f_I(y)dy\\
=\int_{-\infty}^{x}Q_1(z)f_I(z-x)dy
=\int_{-\infty}^{-x_1}Q_1(z)f_I(z-x)dy\\
={\alpha_1-r_1\over\alpha_1}e^{-r_1(x+x_1)}\left[
x_1+{1\over\alpha_1}
-{\alpha_1\over \alpha_1+r_2}D_2e^{-r_2(x_1+x_2)}
\right],
\end{multline*}
where we used the definition of $F_1$ in \eqref{eq:f1f2}. Observing that $D_1$ and $D_2$ are the solutions
of the linear system of equations
\begin{equation}\label{eq:system}
\begin{cases}
D_1+D_2e^{-r_2(x_1+x_2)}=x_1,\\
D_1e^{-r_1(x_1+x_2)}+D_2=x_2,
\end{cases}
\end{equation}
we obtain
\begin{multline}\label{eq:d0}
\E_xQ_1(I)={\alpha_1-r_1\over\alpha_1}e^{-r_1(x+x_1)}
\left[
x_1+{1\over\alpha_1}
-{\alpha_1\over\alpha_1+r_2}(x_1-D_1)
\right]
\\
={\alpha_1-r_1\over\alpha_1}e^{-r_1(x+x_1)}
\left[
{r_2\over\alpha_1+r_2}x_1+{1\over\alpha_1}
-{\alpha_1\over\alpha_1+r_2}D_1
\right].
\end{multline}
%
%
Regarding the maximum, now for $x>x_2$, 
\begin{multline*}
\E_xQ_2(M)={\alpha_2\over r_2}Q_2(x)+e^{r_2x}\int_x^{\infty}Q_2(z)f_I(z)dz\\
={r_2\over\alpha_2}\left[
x-E_2-F_2D_1e^{-r_1(x+x_1)}
\right]
+{\alpha_2-r_2\over\alpha_2}
\left[
x+\frac1{\alpha_2}-{\alpha_2\over r_1+\alpha_2}D_1e^{-r_1(x+x_1)}
\right]\\
=x-{r_2\over\alpha_2}F_2D_1e^{-r_1(x+x_1)}
-{\alpha_2-r_2\over r_1+\alpha_2}D_1e^{-r_1(x+x_1)}
\\
=x-D_1e^{-r_1(x+x_1)}.
\end{multline*}
Summing up, 
\begin{multline*}
\E_xQ_1(I)+\E_xQ_2(M)\\=x+e^{-r_1(x+x_1)}
\left\{{\alpha_1-r_1\over\alpha_1}
\left[
{r_2\over\alpha_1+r_2}x_1+{1\over\alpha_1}
-{\alpha_1\over\alpha_1+r_2}D_1
\right]-D_1\right\}.
\end{multline*}
To show that the second summand in the r.h.s. of the previous formula vanishes, 
we compute
\begin{multline*}
{\alpha_1-r_1\over\alpha_1}
\left[
{r_2\over\alpha_1+r_2}x_1+{1\over\alpha_1}
-{\alpha_1\over\alpha_1+r_2}D_1
\right]-D_1\\
=
x_1{\alpha_1-r_1\over\alpha_1}{r_2\over\alpha_1+r_2}
+{\alpha_1-r_1\over\alpha_1^2}
+{\alpha_1-r_1\over\alpha_1+r_2}D_1-D_1\\
=
x_1{r_2\over\alpha_1}{\alpha_1-r_1\over\alpha_1+r_2}
+{\alpha_1-r_1\over\alpha_1^2}
-{r_1+r_2\over\alpha_1+r_2}D_1.
\end{multline*}
Taking into account \eqref{eq:iff} and the first equation in \eqref{eq:system}, we obtain
$$
x_1={D_1F_1-E_1\over F_1-1}={r_1\over r_2}{\alpha_1+r_2\over\alpha_1-r_1}(D_1F_1-E_1),
$$
and substitute
\begin{multline*}
x_1{r_2\over\alpha_1}{\alpha_1-r_1\over\alpha_1+r_2}
+{\alpha_1-r_1\over\alpha_1^2}
-{r_1+r_2\over\alpha_1+r_2}D_1
\\
={r_1\over\alpha_1}(D_1F_1-E_1)
+{\alpha_1-r_1\over\alpha_1^2}
-{r_1+r_2\over\alpha_1+r_2}D_1
\\
=D_1\left({r_1\over\alpha_1}F_1-{r_1+r_2\over\alpha_1+r_2}\right)
-{r_1\over\alpha_1}E_1
+{\alpha_1-r_1\over\alpha_1^2}=0,
\end{multline*}
in view of the definitions of $E_1$ in \eqref{eq:e1e2} and $F_1$ in \eqref{eq:f1f2}.
Similarly, for $x<-x_1$, we obtain $\E_xQ_1(I)+Q_2(M)=-x$. This concludes the verification of \eqref{eq:equal}.

We now verify \eqref{eq:geq}. From the computations above, we obtained that
\begin{equation}\label{eq:d1}
D_1={\alpha_1-r_1\over\alpha_1}
\left[
{r_2\over\alpha_1+r_2}x_1+{1\over\alpha_1}
-{\alpha_1\over\alpha_1+r_2}D_1
\right].
\end{equation}
This gives that, for $x\geq -x_1$, we have (for further reference we also write the formula for $M$)
\begin{equation}\label{eq:d1d2}
\E_xQ_1(I)=D_1e^{-r_1(x+x_1)},\qquad
\E_xQ_2(M)=D_2e^{r_2(x-x_2)}.
\end{equation}
In particular, this gives that $D_1>0$, and, for $-x_1\leq x\leq x_2$, 
$$
\E_xQ_1(I)+\E_xQ_2(M)=D_1e^{-r_1(x+x_1)}+D_2e^{r_2(x-x_2)},
$$
so the definition \eqref{eq:ve} gives \eqref{eq:ve2}. To conclude with the proof, it remains to verify
condition \eqref{eq:geq} for $x\in[-x_1,x_2]$. 
We take $0\leq x\leq x_2$. We have
\begin{align*}
\E_xQ_2(M)	&=\E_0\left(x+M-E_2-F_2D_1e^{-r_1(x+M+x_1)}\right)^+\\
		&\geq \E_0\left(x+M-E_2-F_2D_1e^{-r_1(M+x_1)}\right)\\
		&=x+\E_0M-E_2-D_1F_2\E_0\left(e^{-r_1M}\right)e^{-r_1(x+x_1)}\\
		&=x-D_1e^{-r_1(x+x_1)}=x-\E_xQ_1(I),
\end{align*}
in view of the definition of $F_2$ in \eqref{eq:f1f2}, and \eqref{eq:d0} and\eqref{eq:d1}.
For $-x_1\leq x\leq 0$ the symmetric computation completes the verification of \eqref{eq:geq}.
This completes the verification of all the hypothesis of Theorem \ref{theorem:1}, and the proof of Theorem \ref{theorem:2}.
\end{proof}
\begin{remark}
It is interesting to note that the function $x-E_2$ appearing in the first two summands in the r.h.s. of \eqref{eq:q2},
in the terminology of \cite{Surya},
is the averaging function  of the one sided problem with payoff
function $g_2(x)=x^+$ (see \cite{Mordecki:2002}). So the remaining term 
in the r.h.s. in \eqref{eq:q2} is a correction due to the presence of the infimum in the two-sided problem.
As $x_2$ is the root of $Q_2$, we obtain $x_2\geq E_2$.
\end{remark}

\begin{remark} The proof of Theorem \ref{theorem:2} also provides bounds to find $u$ numerically. 
First observe that $Q_2(x)\leq x$ for $x\geq 0$, so $\E_0Q_2(M)\leq E_2$.
Furthermore, in view of \eqref{eq:iff} and \eqref{eq:d1d2}, we have
$$
x_1=E_1+F_1D_2e^{-r_2(x_1+x_2)}=E_1+F_1\E_0Q_2(M)e^{-r_2x_2}\leq E_1+F_1E_2.
$$ 
In view of the previous remark, the conclusion is that
$$
E_1+E_2\leq u\leq E_1(1+F_2)+E_2(1+F_1).
$$
\end{remark}
\subsection{Numerical examples}
To illustrate our results we consider two examples. In the first one we choose 
$
(\alpha_1,\lambda_1,\alpha_2,\lambda_2,r)=(1,3,3,1,1).
$
The thresholds are
$x_1=1.17$ and $x_2=0.87$.
The second example is symmetric, with 
$
(\alpha_1,\lambda_1,\alpha_2,\lambda_2,r)=(1,1,1,1,1)
$
The critical thresholds are $x_1=x_2=1.04$.
The corresponding value functions \eqref{eq:ve2} are shown in Figure \ref{figure:1}.
\begin{figure}
\centering
\includegraphics[scale=0.2]{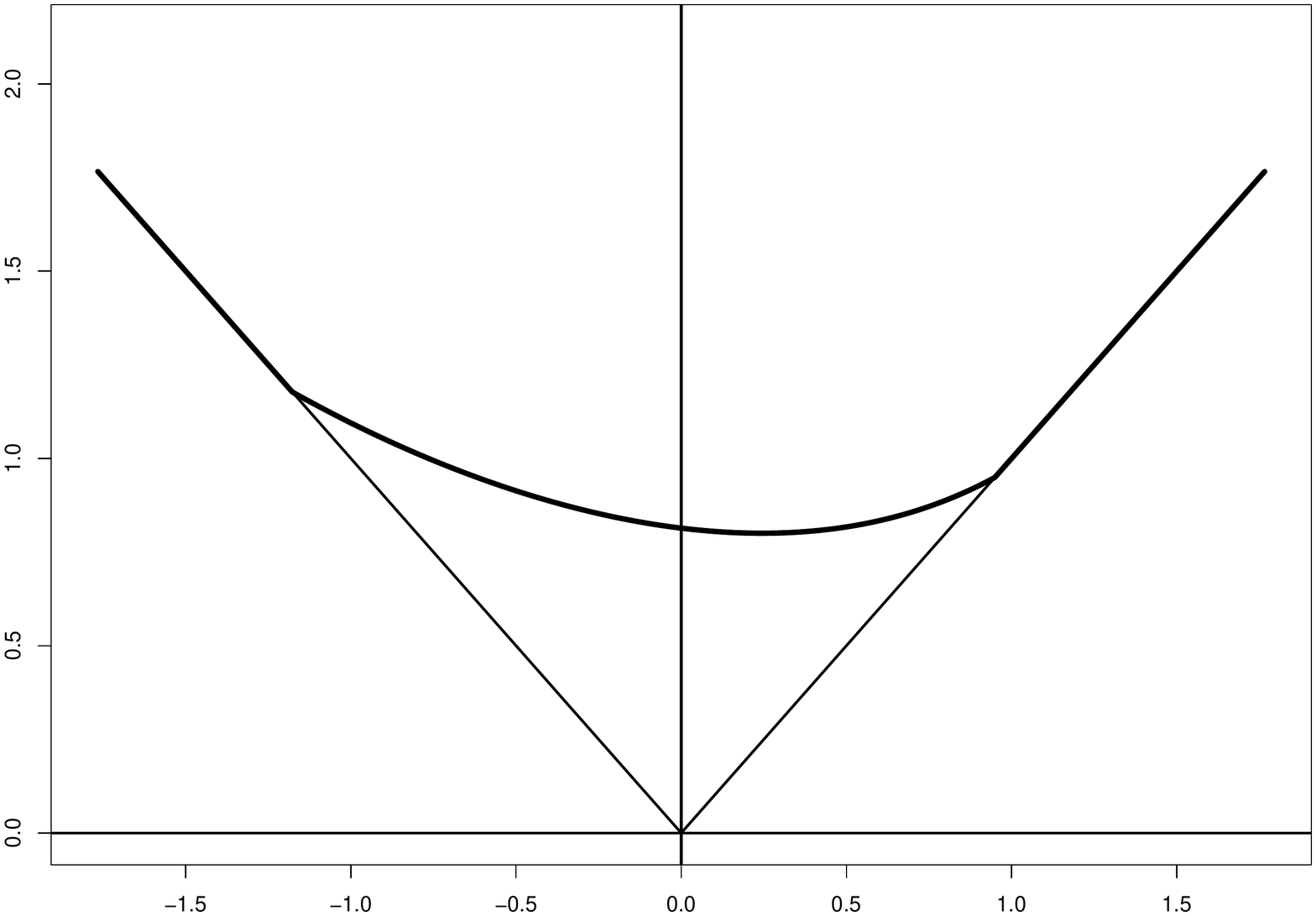}
\includegraphics[scale=0.2]{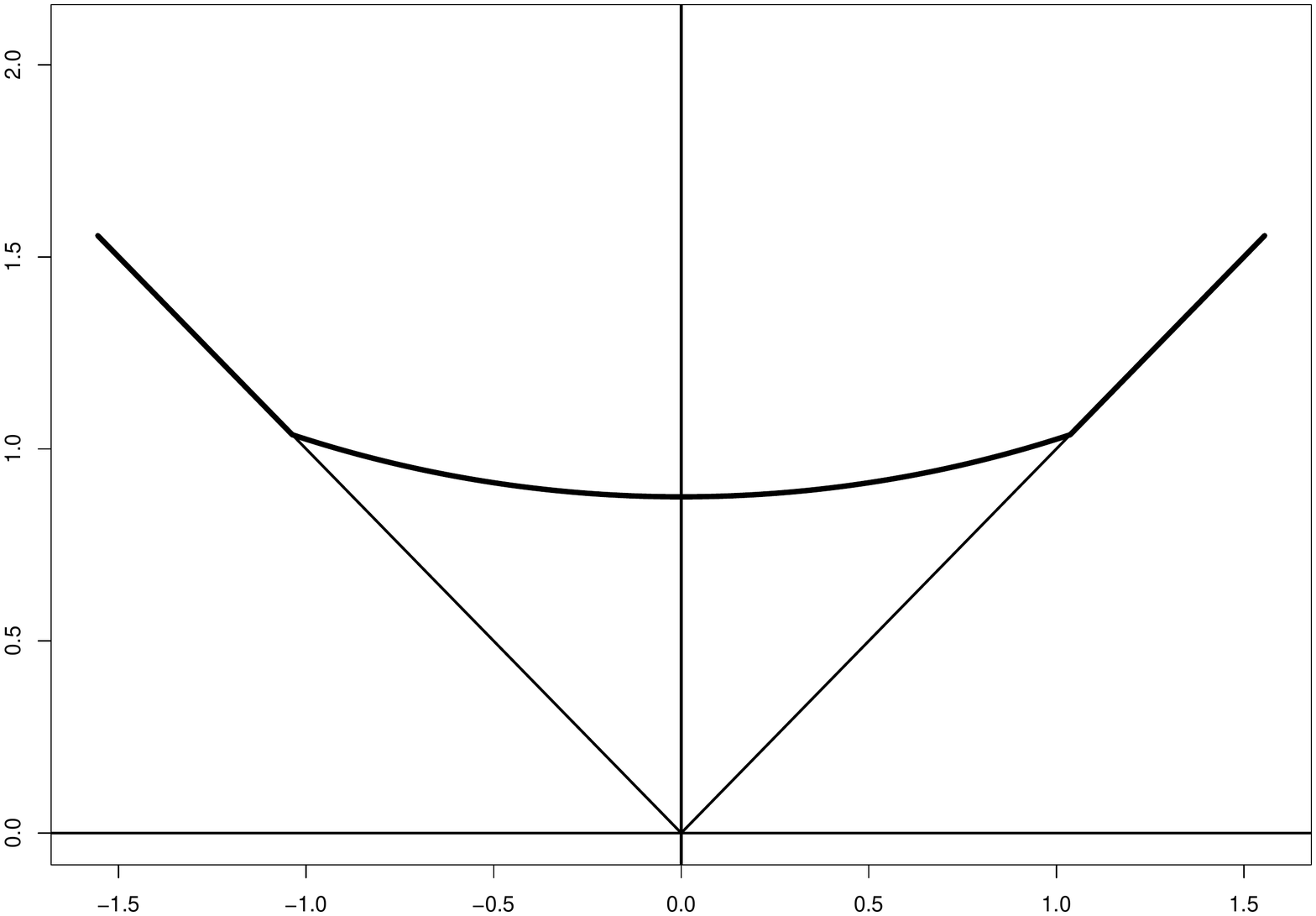}
\caption{The payoff functions $V$ for the set of parameters $(1,3,3,1,1)$ (left) and $(1,1,1,1,1)$ (right).}\label{figure:1}
\end{figure}


\begin{thebibliography}{}

\bibitem[Alili and Kyprianou(2005)]{AliliKyprianou}
{Alili, L. and Kyprianou, A.E.},
{Some remarks on first passage of L\'evy processes, the American put and pasting principles},
   {Annals of Applied Probability},
        {2005},
      {15},
      {3},
      {2062-2080}



\bibitem[Bertoin(1996)]{Bertoin}
Bertoin, J.,
L\'evy processes,
Cambridge Tracts in Mathematics. 121.
Cambridge: Cambridge Univ. Press.,
1996.

\bibitem[Boyarchenko(2006)]{Boyarchenko}
Boyarchenko, S. I., 
Two-Point Boundary Problems and Perpetual American Strangles in Jump-Diffusion Models (2006). 
Available at SSRN: https://ssrn.com/abstract=896260 or http://dx.doi.org/10.2139/ssrn.896260 

\bibitem[Buonaguidi and Muliere(2016)]{BuonaguidiMuliere}
B. Buonaguidi and P. Muliere.
Bayesian sequential testing for L\'evy processes with diffusion and jump components, 
Stochastics, 88:7,  (2016), 1099--1113.

\bibitem[Chang and Sheu(2013)]{ChangSheu}
Ming-Chi Chang and Yuan-Chung Sheu. 
Free boundary problems and perpetual American strangles, 
Quantitative Finance, 13(8), (2013), 1149--1155.

\bibitem[Christensen et al.(2013)]{ChristensenEtAl}
Christensen, S.,  Salminen, P. and  Ta, Bao Quoc.
Optimal stopping of strong Markov processes.
Stochastic Process. Appl.  123,  no. 3, (2013), 1138--1159.   

\bibitem[De Donno et al.(2019)]{Palmowski}
De Donno M., Palmowski Z., Tumilewicz J. 
Double continuation regions for American and Swing options with negative discount rate in L\'evy models. 
Mathematical Finance (2019), 1--32.

\bibitem[Kyprianou(2006)]{Kyprianou}
{Kyprianou, A. E.},
{Introductory lectures on fluctuations of L{\'e}vy processes with applications}.
{Springer}, (2006).
 
     
\bibitem[Lewis and Mordecki(2008)]{LewisMordecki}
Lewis, A., Mordecki, E.
Wiener-Hopf factorization for  L\'evy processes with negative jumps with rational transforms.
\textit{Journal of Applied Probability, Vol 45, Nr. 1.} 
(2008),
118--134.

\bibitem[Mordecki(2002a)]{Mordecki:poli}
Mordecki, E.,
{Optimal stopping and perpetual options for L\'evy processes.},
Finance Stoch.,
{\bf 6(4)}
473--493,
(2002).
		

\bibitem[Mordecki(2002b)]{Mordecki:2002}
Mordecki, E.
Perpetual options for L\'evy processes in the Bachelier model.
\newblock{Proceedings of the Steklov Mathematical Institute},
\textbf{237},
256--264,
(2002).



\bibitem[Mordecki and Mishura(2016)]{MordeckiMishura}
Mordecki, E., and Mishura, Yu.,
Optimal stopping for {L}\'{e}vy processes with one-sided solutions,
SIAM Journal on Control and Optimization,
54(5),
(2016),
2553--2567.



\bibitem[Mordecki and Salminen(2007)]{MordeckiSalminen}
 Mordecki, E. and  Salminen, P.
 Optimal stopping of Hunt and L\'evy processes.
 Stochastics  79, no. 3-4, (2007), 233--251.   

\bibitem[Shiryaev(2008)]{Shiryaev}
Shiryaev, A. N.
Optimal stopping rules.
Stochastic Modelling and Applied Probability, 8. Springer-Verlag, Berlin,  2008.

\bibitem[Surya(2007)]{Surya}
Surya, B. A.,
An approach for solving perpetual optimal stopping problems driven by L\'evy processes,
Stochastics. An International Journal of Probability and Stochastic Processes,
79,3-4, 337 -- 361, (2007).
\end{thebibliography}
\end{document}